\newtheorem{theorem}{Theorem}[section]
\newtheorem{proposition}[theorem]{Proposition}
\newtheorem*{claim*}{Claim}
\let\R\relax
\let\L\relax
\let\c\relax
\newcommand{\R}{\ensuremath{\mathbb{R}}}
\newcommand{\L}{\ensuremath{\mathbb{L}}}
\newcommand{\cC}{\ensuremath{\mathcal{C}}}
\newcommand{\cD}{\ensuremath{\mathcal{D}}}
\newcommand{\cE}{\ensuremath{\mathcal{E}}}
\newcommand{\cS}{\ensuremath{\mathcal{S}}}
\newcommand{\cT}{\ensuremath{\mathcal{T}}}
\DeclareMathOperator{\spann}{span}
\DeclareMathOperator{\im}{Im}
\DeclareMathOperator{\c}{C}
\title[Semi-Riemannian hypersurfaces in $\L^{n+1}$ with ...]{Semi-Riemannian hypersurfaces in $\L^{n+1}$ with a totally geodesic foliation of codimension one}
\author[S.\ M.\ B.\ Kashani]{S.M.B. Kashani}
\author[M.\ J.\ Vanaei]{M.J. Vanaei}
\author[S.\ M.\ Yaghoubi]{S.M. Yaghoobi}
\address{Dept. of Pure Math., Fculty of Math. Sciences, Tarbiat Modares University, P.O. Box: 14115-134, Tehran, Iran}
\email{kashanim@modares.ac.ir}
\email{javad.vanaei@modares.ac.ir}
\email{seyed.yaghoobi@modares.ac.ir}
\begin{document}

%\blfootnote{The research is supported by Iran national Science foundation via grant no. 95012382, for the second and third authors.}

\subjclass[2010]{53B25}
\keywords{Minkowski space, Semi-Riemannian hypersurface, Totally geodesic foliation,   Partial tube, Relative nullity}

\begin{abstract}
We classify hypersurfaces of the Minkowski space $\L^{n+1}$ that carry a totally geodesic foliation with complete leaves of codimension one. We prove that such a hypersurface is ruled, or a partial tube over a curve or contains a two or three dimensional strip. Moreover, if the hypersurface is embedded then it is a partial tube over a curve. 
\end{abstract}

\maketitle 

\section{Introduction}
The study of submanifolds, specially hypersurfaces in space forms, is one of the main classical problems in differential geometry.
\begin{comment} 
Ferus \cite{Fer70b} used a fibration of Stiefel manifolds to prove that foliations on Riemannian manifolds with certain properties, common to several geometrically defined ones such as minimum relative nullity foliation, have codimension either large or zero. He then used it to give some characterization of totally geodesic complete submanifolds in some known spaces. 
\end{comment}
It is known that on hypersurfaces with the same constant curvature as the ambient space form, leaves of the relative nullity distribution form a totally geodesic complete foliation of codimension one on the set of points with minimum relative nullity. Under some hypothesis this foliation can lead to a decomposition of the hypersurface as a product of an immersion of lower dimension with an identity factor. In \cite{HarNir59} codimension one isometric immersions between Euclidean spaces are classified as cylinders over plane curves, namely, all such isometric immersions have the form $id \times g : \R^{n-1} \times \R \to \R^{n-1} \times \R^2$ with orthogonal factors where $g : \R \to \R^2$ is a unit speed plane curve. In the above decomposition leaves of the minimum relative nullity foliation yield the Euclidean factor $\R^{n-1}$. Corresponding results have been given for the Minkowski space $\L^m = \R^m_1$ in \cite{Gra79b}. When the induced metric on the relative nullities of a  codimension one isometric immersion between Minkowski spaces is non-degenerate, the result and the proof are basically similar to the Euclidean case. However, in the case of degenerate relative nullities different techniques have been applied and it turns out that the immersion decomposes as $id \times g : \R^{n-2} \times \L^2 \to \R^{n-2} \times \L^3$ where $g : \L^2 \to \L^3$ is an isometric immersion which can be described using a Cartan frame over a null curve in $\L^3$ and is known as a $B$-scroll.

In \cite{DajRovToj15} a generalized case was addressed as the problem of determining Euclidean hypersurfaces of dimension at least three which admit complete totally geodesic foliations of codimension one. It is shown in \cite{DajRovToj15} that such hypersurfaces in the Euclidean space are either ruled or partial tube over a curve, otherwise they contain a surface-like strip. Some examples of such hypersurfaces are as follows (see \cite{DajRovToj15} for more details): the rulings of a ruled hypersurface form a totally geodesic foliation of codimension one. Flat hypersurfaces given by isometric immersions $f : U \subset \R^n_r \to \R^{n+1}_s$ ($r , s = 0 , 1$) are examples of ruled hypersurfaces which are foliated by (open subsets of) affine subspaces of $\R^{n+1}_s$. 
\begin{comment}}
A hypersurface $f : M^n \to \R^{n+1}_s$ in semi-Euclidean space $\R^{n+1}_s$ ($s = 0 , 1$), $n \geq 3$, is ruled if it carries a smooth foliation of codimension one by (open subsets of) affine subspaces of $\R^{n+1}_s$, whereas it is a partial
tube over a smooth curve $\gamma : I \subset \R \to \R^{n+1}_s$ if it is generated by parallel transporting
along $\gamma$, with respect to the normal connection, a hypersurface $N^{n-1}$ of the (affine)
normal space $N_\gamma I(t_0)$ to $\gamma$ at some point $t_0 \in I$ (see Section~\ref{preliminaries} for further details). By
a $k$-dimensional strip we mean an open subset $U \subset M^n$ isometric to a product $\R^{n-k} \times L^k$ along which $f$ splits as $f = id \times g$, with $g : L^k \to \R^{k+1}_r$ ($r = 0, 1$) an isometric immersion and $id : \R^{n-k}_{s-r} \to \R^{n-k}_{s-r}$ the identity map.
\end{comment}

Let $\gamma : I \to \R^{n+1}_s$ be a smooth curve and $N^{n-1}$ be a hypersurface of the (affine) normal space to $\gamma$ at some point $\gamma(t_0)$ with $t_0 \in I$. A hypersurface can be obtained in $\R^{n+1}_s$ by parallel transporting $N^{n-1}$ along $\gamma$ with respect to the normal connection. Such a hypersurface is called a partial tube over a curve (see Section~\ref{preliminaries} for further details). The parallel displacements of $N$ give rise to a codimension one totally geodesic foliation of $M$, whose leaves are complete whenever $N$ is complete.

\begin{comment}
By a $k$-dimensional strip we mean an open subset $U \subset M^n$ isometric to a product $\R^{n-k} \times L^k$ along which $f$ splits as $f = id \times g$, with $g : L^k \to \R^{k+1}_r$ ($r = 0, 1$) an isometric immersion and $id : \R^{n-k}_{s-r} \to \R^{n-k}_{s-r}$ the identity map. Two dimensional strips are also referred to as surface-like strips. 
\end{comment}
An example of a surface-like hypersurface can be given by $M^n = \R^{n-2}_s \times L^2$ with $f = id \times g : M^n \to \R^{n+1}_{s+r}$ where $g : L^2 \to \R^3_r$ is a semi-Riemannian surface. If $\cD_0$ denote the one-dimensional distribution on $L^2$ derived from a non-degenerate foliation of $L^2$ by geodesics, then the codimension one distribution $\cD = \R^{n-2}_s \oplus \cD_0$ defines a totally geodesic foliation on $M^n$ whose leaves are complete whenever the leaves of $\cD_0$ are complete.

In this paper we consider the above problem in the Minkowski space. The main difficulty in classifying such hypesurfaces in non-Riemannian space forms comes from the fact that the relative nullity distribution may be degenerate at some points. To handle the degenerate case we use a totally geodesic screen distribution instead of the realtive nullities. In Proposition~\ref{t.g screen distribution} we make such a screen distribution. Then we generalize the notion of conullity operator to screen distributions and show that in the Minkowski space, apart from the three possibilities in the Euclidiean case, the hypersurface may contain a three dimensional strip which corresponds to the case of degenerate relative nullities. This makes our approach less frame-dependent comparing to \cite{DajRovToj15} in which an appropriate local frame compatible with the codimension one foliation and the minimum relative nullity foliation was constructed and used. Our main theorem states:

\begin{theorem}\label{MainTheorem}
Let $f : M^n_s \to \L^{n+1}$ be a connected nowhere flat isometric immersion from a semi-Riemannian manifold $M^n_s$, $s=0, 1$, carrying a semi-Riemannian totally geodesic foliation of codimension one and index $0$ or $1$. Then $f(M)$  
\begin{itemize}
\item[$(i)$] is a ruled hypersurface, or
\item[$(ii)$] is a partial tube over a curve, or
\item[$(iii)$] contains a $2$-dimensional Riemannian or Lorentzian strip, or
\item[$(iv)$] contains a $3$-dimensional Lorentzian strip.
\end{itemize}
\end{theorem}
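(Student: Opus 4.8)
The plan is to combine the classical structure theory of the relative nullity distribution with the conullity (splitting) operator, the one novelty being the use of the totally geodesic screen distribution of Proposition~\ref{t.g screen distribution} to bypass the possible degeneracy of the relative nullity. First I would fix notation: let $\cD$ be the integrable distribution tangent to the foliation and $\cD^{\perp}$ its orthogonal complement; since the foliation has index $0$ or $1$ the metric on the line field $\cD^{\perp}$ is nondegenerate, so locally $\cD^{\perp}=\spann T$ with $\langle T,T\rangle=\varepsilon\in\{1,-1\}$, and the totally geodesic hypothesis immediately gives $\nabla_{X}T=0$ for all $X\in\Gamma(\cD)$. Fixing a local unit normal $\eta$ with shape operator $A$, nowhere flatness means $\operatorname{rank}A\ge 2$ everywhere.

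The first real step is an algebraic dichotomy. Each leaf $L$ of $\cD$ is totally geodesic in $M$, so its curvature tensor is the restriction of that of $M$; feeding in the Gauss equation of $f$ and using that $R^{L}$ takes values in $\cD$ yields $\langle AX,Z\rangle\langle AY,T\rangle=\langle AY,Z\rangle\langle AX,T\rangle$ for all $X,Y,Z\in\cD$. I would then separate the locus where $A(\cD)\subseteq\cD$ from the locus where $A|_{\cD}$ has rank one; on the latter, nowhere flatness forces $\operatorname{rank}A=2$, so $\dim\Delta=n-2$ and $\Delta=\ker(A|_{\cD})\subseteq\cD$ for the relative nullity $\Delta=\ker A$, while $\Delta^{\perp}$ is a plane bundle containing $\cD^{\perp}$ that is either nondegenerate or degenerate with a one-dimensional null radical contained in $\Delta$.

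On the open dense subset of minimal relative nullity inside that locus I would invoke Proposition~\ref{t.g screen distribution} to obtain a totally geodesic screen $\cS\subseteq\Delta$ — equal to $\Delta$ when $\Delta$ is nondegenerate and of dimension $\dim\Delta-1$ otherwise — and extend the conullity operator to $\cS$, getting a section $C$ of $\operatorname{End}(\cS^{\perp})$ that along each leaf of $\cS$ satisfies a Riccati-type ODE coming from the Codazzi equation of $f$, together with the structural relation tying $C$ to $A$. Since $\cS$ lies in the relative nullity, $f$ maps its leaves into affine subspaces of $\L^{n+1}$, and since $\cD^{\perp}\subseteq\cS^{\perp}$ everything reduces to the behaviour of $C$ on $\Delta^{\perp}$ and of the $\cD^{\perp}$-integral curves. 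Analyzing the solutions I expect three outcomes: if this one-parameter family of $\cD^{\perp}$-curves collapses to a single curve $\gamma$ and normal parallel transport along $\gamma$ carries the leaves of $\cD$ onto one another, then $f(M)$ is a partial tube over $\gamma$, case $(ii)$; if the leaves of $\cD$ stay inside a fixed family of affine hyperplanes, then $f(M)$ is foliated by affine $(n-1)$-planes, i.e.\ ruled, case $(i)$; and in the remaining behaviour — as well as on the locus $A(\cD)\subseteq\cD$ of the dichotomy — a Moore-type splitting argument along a maximal nondegenerate autoparallel subdistribution $\cS_{0}\subseteq\Delta$ of $\cD$ (along which $\eta$ is automatically parallel in the ambient space) shows that $f$ splits locally as $\id\times g$ with $g:L^{m}\to\R^{m+1}_{r}$; a dimension count gives $m=2$ when $\Delta$ is nondegenerate, a $2$-dimensional Riemannian or Lorentzian strip (case $(iii)$), and $m=3$ when $\Delta$ is degenerate, where its null radical forces $g:L^{3}\to\R^{4}_{1}$ to be a generalized $B$-scroll over a null curve, a $3$-dimensional Lorentzian strip (case $(iv)$).

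The main obstacle is exactly the degenerate relative nullity case, absent from the Euclidean treatment in \cite{DajRovToj15}: there the conullity operator need not be diagonalizable and there is no local frame simultaneously adapted to $\Delta$ and to $\cD$, so one must redo the Riccati analysis and the splitting with the screen $\cS$ in place of $\Delta$, and the delicate point is identifying the emerging three-dimensional leaf factor as a Lorentzian $B$-scroll. A second, more routine, issue is propagating the ruled and partial-tube conclusions from the open dense minimal-nullity set to all of $f(M)$, which I would handle using the continuity of $\Delta$ and of $\cD$ across the closed set where the nullity jumps.
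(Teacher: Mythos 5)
Your toolbox is the paper's (the Gauss-equation identity forcing $\dim S(\cD)\le 1$ off the adapted locus, the relative nullity, the screen distribution of Proposition~\ref{t.g screen distribution}, the conullity operator with its Riccati equation, and a de Rham/Moore-type splitting), but the way you distribute the four conclusions over the pointwise alternatives is essentially backwards, and as organized the argument fails. In the paper the dichotomy is used \emph{globally}: if $\alpha_f$ is adapted to $(\cD,\cD^\perp)$ at every point, Theorem~\ref{twisted} (Ponge--Reckziegel) and Theorem~\ref{twisted-partialtube} (Tojeiro) give that $f(M)$ is a partial tube over a curve, case $(ii)$; if $\alpha_f$ vanishes on $\cD\times\cD$ at every point, the leaves of $\cD$ are totally geodesic in $\L^{n+1}$ and $f(M)$ is ruled, case $(i)$; only otherwise (by connectedness and nowhere-flatness) is the open set $U$, where $S(\cD)$ is one-dimensional and transversal to both $\cD$ and $\cD^\perp$, non-empty, and it is \emph{there} that $\cT\subset\cD$ has dimension $n-2$, one takes $\cS=\cT$ (non-degenerate case) or the screen of Proposition~\ref{t.g screen distribution} (degenerate case), and completeness of its leaves plus flatness of $\L^{n+1}$ force the conullity to vanish, so that $\cS$ and $f_*\cS$ are parallel, $\cC=\cS^\perp$ is totally geodesic, and the splitting $f=\id\times g$ yields the $2$- or $3$-dimensional strip of cases $(iii)$--$(iv)$. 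You instead propose to extract the strip splitting ``on the locus $A(\cD)\subseteq\cD$'' and to read the partial-tube and ruled cases off the qualitative behaviour of the Riccati solutions on the rank-one locus. Both halves break down. On the adapted locus the shape operator preserves $\cD$ and $\cD^\perp$ and can have full rank, so the nullity $\Delta$ may be trivial (or transversal to $\cD$) and there is no autoparallel subdistribution $\cS_0\subseteq\Delta\cap\cD$ of rank $n-2$ or $n-3$ to split along; any nowhere-flat partial tube with non-degenerate shape operator is adapted and contains no such strip. Conversely, on the non-adapted locus $\alpha_f$ is by definition not adapted to the net $(\cD,\cD^\perp)$ and the leaves of $\cD$ are not ambient-geodesic, so by the characterization from \cite{Toj16} quoted before Theorem~\ref{twisted-partialtube} neither the partial-tube structure (with the leaves of $\cD$ as fibers, as in your description) nor the ruled structure can occur there; no analysis of the Riccati ODE will produce them. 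What the Riccati/completeness argument actually delivers on that locus is precisely $\c_Z\equiv 0$ and hence the strip.

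Two consequences of this mis-assignment. First, conclusions $(i)$ and $(ii)$ are global statements about $f(M)$, obtained in the paper exactly when the corresponding pointwise condition holds everywhere on the connected manifold, while $(iii)$ and $(iv)$ only assert the existence of an open strip; so the ``propagation by continuity from the minimal-nullity set to all of $f(M)$'' that you flag as a remaining issue is not a step of the correct proof at all, and your locus-by-locus output does not assemble into the stated disjunction. Second, the decisive analytic step -- complete leaves of $\cS$ in the flat ambient space force the only globally defined Riccati solution $\c_Z\equiv 0$, whence the parallelism of $\cS$ and the splitting -- is only gestured at, while the identification of the three-dimensional factor as a $B$-scroll, which you single out as the delicate point, is not needed: case $(iv)$ claims only a $3$-dimensional Lorentzian strip.
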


An example of hypersurfaces satisfying $(iv)$ are those containing a three-dimensional strip whose non-Euclidean factor is a generalized cylinder over a null curve in $\L^{n+1}$ and the minimal polynomial of its shape operator is $x^2$. Such cylinders are modeled locally as follows: let $\gamma(t)$ be a null curve in $\L^{r+s+3}$ and take a pseudo-orthonormal frame 
\[
\{ \xi(t), \zeta(t), X(t), Y_1(t), \ldots, Y_r(t), Z_1(t), \ldots, Z_s(t) \}
\]
along $\gamma$ such that  
\begin{align*}
& \gamma'(t) = \xi(t) ,\\
& X'(t) = -B(t)\zeta(t), \quad \text{with} \ B \neq 0,\\
& Z'_j(t) \in \spann\{\zeta(t), Z_1(t), \ldots, Z_s(t) \}, \quad 1 \leq j \leq s,
\end{align*}

Then the parametrized hypersurface in $\L^{r+s+3}$ defined locally around the origin by 
\begin{align*}
h(t, u, y_1, \ldots, y_r, z_1, \ldots, z_s) = & \gamma(t) + u\zeta(t) + \sum_iy_iY_i(t) + \sum_jz_jZ_j(t) \\
& \frac{1}{a}X(t) - \sqrt{\frac{1}{a^2} - \sum_iz_i^2}X(t)
\end{align*}
for some $a > 0$, is called a generalized cylinder (of type 1, according to \cite{Mag85}). Its shape operator with respect to the normal vector field 
\[
\eta = a\left( \sqrt{\frac{1}{a^2} - \sum_iz_i^2}\right) X(t) - a\sum_iz_iZ_i(t)
\]
has minimal polynomial $x^2(x-a)$ if $s > 0$ and $x^2$ if $s = 0$. 

Let $\cD_0$ be the two-dimensional distribution of tangent spaces of a non-degenrate totally geodesic foliation of a generalized cylinder $g : L^3 \to \L^4$. Then one can see that the codimension one distribution $\cD = \R^{n-2} \oplus \cD_0$ on the hypersurface $f : M^n = \R^{n-3} \times L^3 \to \L^{n+1}$ with $f = id \times g$ is totally geodesic.

In the local version of the above problem, only one further class of examples may occur. Indeed, it is proved in \cite{DajRovToj15} that in the setting of Theorem~\ref{MainTheorem} in $\R^{n+1}$, $M$ is locally a ruled hypersurface, a partial tube over a curve or an envelope of one-parameter families of flat hypersurfaces, as long as it does not contain any strip. Having Theorem~\ref{MainTheorem}, one can see that a similar approach as in the Euclidean case leads to the same result in the Minkowski space.

Moreover, we prove in Theorem~\ref{embedded} that if $M$ is an embedded hyprsurface then it is a partial tube over a curve. We also show that the Einstein and conformally flat hypersurfaces carrying a semi-Riemannian totally geodesic foliation of codimension one, do not contain any strip (Proposition~\ref{Ein.Conf}). 

The preliminaries are given in the next section and the proofs are given in Section~\ref{ProofMainTheorem}.

Throughout the paper, unless otherwise stated, $M^n_s = M^n$, $s = 0, 1$, denotes an $n$-dimensional smooth manifolds with a semi-Riemannian metric of index $0$ or $1$.
\section{Preliminaries}\label{preliminaries}
\noindent
The content of this section can be found in \cite{AbeMag86, BerConOlm16, CarWes95, DajRovToj15, One83, Toj16}.

Let $\R^m_s$ denote the $m$-dimensional vector space $\R^m$ considered as a smooth $m$-manifold equipped with the metric $g = -\Sigma_1^s dx_i^2 + \Sigma_{s+1}^m dx_i^2$ where $(x_1, \ldots, x_m)$ is the standard coordinate system in $\R^m$. In particular, $\R^m = \R^m_0$ is the Euclidean space and the Lorentzian space form $\L^m = \R^m_1$ is known as the Minkowski space.  

A submanifold $M^n$ in the Euclidean space $\R^m$ or the Minkowski space $\L^m$ is said to be \textit{ruled} if there is a foliation on $M$ by by totally geodesic submanifolds of the ambient space of dimension $n-1$. 

Let $g : L^k \to \R^{k+1}_s$ be an isometric immersion. Then we call the product immersion $id \times g : \R^{n-k}_r \times L^k \to \R^{n+1}_{r+s} = \R^{n-k}_r \times \R^{k+1}_s$ a $k$-dimensional strip (in $\R^{n+1}_{r+s}$). We say that a hypersurface $f : M^n \to \bar{M}^{n+1}$ or, simply $f(M)$, contains a $k$-dimensional strip if an open subset $U \subset M$ is a $k$-dimensional strip, namely, $U$ is isometric to a product $\R^{n-k}_r \times L^k$ and $f|_U$ splits into $f|_U = id \times g$, where $g: L^k \to \R^{k+1}_s$ is an isometric immersion and $id = id_{\R^{n-k}_r}$ is the identity map on $\R^{n-k}_r$. 

\subsection*{Partial Tube}
Partial tubes were first introduced by Carter and West in \cite{CarWes95}. Here we recall the definition of the special case of \textit{partial tube over a curve} which is needed in our results. 

Let $r, s \in \{0,1\}$ and $\gamma : I \to \R^m_s$ be a unit speed curve and assume that there exists an orthonormal frame $\{ \eta_1, \ldots, \eta_k \}$ along $\gamma$. In particular, the vector subbundle $\cE = \spann \{ \eta_1, \ldots, \eta_k \}$ is parallel and flat and the map $\phi : I \times \R^k_r \to \R^m_s$ defined by 
\[
\phi_t(y) = \phi(t, y) = \sum\limits_{i=1}^k y_i \eta_i(t) ,
\]
with $t \in I$ and $y =(y_i)_1^k \in \R^k_r$, is a parallel vector bundle isometry. Let $f_0 : N^{n-1} \to \R^k_r$ be an isometric immersion whose codimension is not reducible. Then the map 
\[
f(p,t) = \gamma (t) + \phi_t(f_0 (p))
\]
from $M^n = N^{n-1} \times I$ to $\R^m_s$ is an immersion whenever $f_0(N) \subset \Omega(\gamma;\phi)$, where
\[
\Omega(\gamma;\phi) = \{y \in \R^m_s : g(\gamma''(t) , \phi_t(y)) \neq 1 \ \ \text{for any} \  t \in I \}.
\]
In this case $f(M)$ is called the \textit{partial tube over} $\gamma$ with fiber $f_0$. With the induced metric on $M^n$, the distribution formed by the tangent spaces to the first factor of $M^n = N^{n-1} \times I$ is totally geodesic in $TM$ and the second fundamental form of $f$ satisfies $\alpha_f(X, \partial /\partial t) = 0$ for any $X \in TN$, where $\partial /\partial t$ is a unit vector field tangent to the factor $I$. The converse is also true as follows.

A set of integrable subbundles $(\cE_i)_{i = 1}^r$ of the tangent bundle $TM$  of a connected manifold $M$ is called a \textit{net} on $M$ if $TM = \bigoplus_{i=0}^r \cE_i$. An \textit{orthogonal net} on $M$ is a net $(\cE_i)_{i = 1}^r$ with mutually orthogonoal subbundles $\cE_i$. On a product $N\times L$, more generally on a twisted product $N \times_\rho L$, a \textit{product net}, is naturally defined whose subbundles are the tangent spaces of the factors. Recall that a twisted product is a product manifold $N \times L$ with a twisted product metric $dx^2 + \rho^2 dy^2$ where $dx^2$ and $dy^2$ are fixed metrics on $N$ and $L$, respectively, and $\rho \in C^\infty(N \times L)$. The following theorem is an special case of Theorem~1 in \cite{PonRec93}:

\begin{theorem}\label{twisted}
If a semi-Riemannian manifold $M$ carries a codimension one totally geodesic foliation, then it is locally isometric to a twisted product $N \times_\rho I$, where $I \subset \R$ is an open interval. The isometry is globally defined if $M$ is simply connected and the leaves of the foliation are complete.
\end{theorem}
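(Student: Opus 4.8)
The plan is to construct the twisted product structure by hand, by sliding a leaf of the foliation $\cF$ along a suitably normalized transverse vector field; the totally geodesic hypothesis will be exactly what forces the metric to remain ``leafwise frozen'' under this flow. I assume throughout, as in the paper's setting, that $\cF$ is non-degenerate, so that its tangent distribution $\cD$ is a non-degenerate corank-one subbundle, $TM = \cD \oplus \cD^\perp$ with $\cD^\perp$ a non-degenerate line subbundle, and $\cD = (\cD^\perp)^\perp$.

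The first, and decisive, ingredient is a pointwise identity. Fix $p \in M$ and choose on a neighborhood a unit local section $\xi$ of $\cD^\perp$, so that $g(\xi,\xi) = \varepsilon \in \{1,-1\}$ is locally constant. Since each leaf is a totally geodesic hypersurface along which $\xi$ restricts to a unit normal, I claim $\nabla_X \xi = 0$ for every $X \in \cD$: indeed $g(\nabla_X\xi,\xi) = \tfrac12 X\bigl(g(\xi,\xi)\bigr) = 0$ kills the $\cD^\perp$-component, while the $\cD$-component equals minus the Weingarten operator of the leaf applied to $X$, which vanishes by total geodesy. Using this, I would rescale $\xi$ to a section $\zeta := \nu\xi$, $\nu > 0$, that preserves $\cF$: the condition $[\zeta,\cD]\subseteq\cD$ reduces (using $\cD = \xi^\perp$ and $\nabla_X\xi = 0$) to the leafwise linear equation $X(\log\nu) = -\varepsilon\,g(X,\nabla_\xi\xi)$, $X\in\cD$, whose integrability is the closedness along the leaves of the $\cD$-restriction of $g(\nabla_\xi\xi,\cdot)$. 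That holds automatically: if $\theta$ is a local defining $1$-form of $\cF$ ($\ker\theta = \cD$), Frobenius gives $d\theta = \theta\wedge\beta$, and $d\,d\theta = 0$ forces $\theta\wedge d\beta = 0$, so $d\beta$ vanishes on $\cD\times\cD$; since $\beta$ agrees on $\cD$ with $\varepsilon\,g(\nabla_\xi\xi,\cdot)$, we are done. (Alternatively this is the symmetry $R(X,\xi,\xi,Y)=R(Y,\xi,\xi,X)$ of the Jacobi operator, once more combined with $\nabla_X\xi = 0$.) Solving on a small, hence leafwise simply connected, neighborhood, normalized by $\nu\equiv 1$ along the $\xi$-integral curve through $p$, yields $\zeta$; equivalently $\nu^{-1}\theta$ is now closed, $\nu^{-1}\theta = dt$ for a submersion $t$ whose level sets are the leaves, and $\zeta$ is the section of $\cD^\perp$ with $dt(\zeta)=1$, so $\mathcal L_\zeta(dt) = 0$ and the flow of $\zeta$ carries leaves to leaves.

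Now let $N_0$ be the leaf-piece through $p$ and $\Phi : N_0\times I \to U$ the flow of $\zeta$, a diffeomorphism onto a neighborhood $U$ of $p$, $I\subset\R$ an interval about $0$. Writing $x^i$ for coordinates on $N_0$ and $t$ for the flow parameter: $\Phi_*\partial_{x^i}\in\cD$ (the flow preserves $\cD$, and $\partial_{x^i}\in\cD$ at $t=0$) while $\Phi_*\partial_t = \zeta\in\cD^\perp$, so the mixed coefficients of $\Phi^*g$ vanish; $g(\zeta,\zeta) = \varepsilon\nu^2$ gives the $\partial_t\partial_t$-coefficient $\varepsilon\rho^2$ with $\rho = \nu\circ\Phi > 0$; and for $X,Y\in\cD$,
\[
(\mathcal L_\zeta g)(X,Y) = g(\nabla_X\zeta, Y) + g(\nabla_Y\zeta, X) = 0,
\]
because $\nabla_X\zeta = (X\nu)\xi + \nu\nabla_X\xi = (X\nu)\xi\perp\cD$ by the identity above; hence the $\cD$-block of $\Phi^*g$ is independent of $t$. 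Therefore $\Phi^*g = g_{N_0} + \varepsilon\,\rho(x,t)^2\,dt^2$, so $U$ is isometric to the twisted product $N_0\times_\rho I$ — the local assertion.

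For the global statement, which I expect to be the only genuine difficulty, one upgrades this by a standard monodromy argument. If $M$ is simply connected then the line bundle $\cD^\perp$ is trivial, so $\xi$, $\theta$ and the data above are globally defined; the locally defined primitives $t$ of $\nu^{-1}\theta$ differ on overlaps by affine maps $t\mapsto at+b$, and the resulting $\R$-valued monodromy around loops vanishes since $\pi_1(M)=0$, producing a global submersion $t : M\to\R$ with $\ker dt = \cD$ and a global foliation-preserving section $\zeta$ of $\cD^\perp$. Completeness of the leaves then ensures that fibering $M$ over $t(M)$ by the leaves and identifying them all with a fixed leaf $N$ via the flow of $\zeta$ yields a global diffeomorphism $N\times t(M)\to M$ along which the computation above exhibits the metric as a twisted product metric. (In any case, both parts are precisely the relevant special case of Theorem~1 in \cite{PonRec93}.) The local construction is essentially forced once $\nabla_X\xi = 0$ on $\cD$ is observed; the real work is the global patching, where simple connectivity is used to trivialize the monodromy and completeness of the leaves to keep the transverse flow from running off $M$ before it sweeps out everything.
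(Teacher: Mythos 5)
The paper does not prove this statement at all: it simply quotes it as a special case of Theorem~1 of \cite{PonRec93}, so your self-contained construction is a genuinely different (and more informative) route. Your local argument is correct: the key identity $\nabla_X\xi=0$ for $X\in\cD$ (the unit normal of a totally geodesic leaf is parallel along the leaf), the solvability of $X(\log\nu)=-\varepsilon\,g(X,\nabla_\xi\xi)$ via $d\theta=\theta\wedge\beta$, $\theta\wedge d\beta=0$, and the computation $(\mathcal{L}_\zeta g)|_{\cD\times\cD}=0$, which makes the leaf metric $t$-independent and yields $\Phi^*g=g_{N_0}+\varepsilon\rho^2\,dt^2$, are all sound; you also correctly flag that non-degeneracy of the foliation is needed (and is the paper's standing hypothesis, since without it $\cD^\perp\subset\cD$ and no orthogonal splitting exists). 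What this buys is transparency about exactly where total geodesy enters, which the citation hides.

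The global half, however, you have only sketched, and the sketch has two weak points. First, local first integrals of the foliation are related on overlaps by arbitrary monotone reparametrizations, not by affine maps, so the claimed $\R$-valued monodromy is not obviously a homomorphism that simple connectivity kills; producing a single global $\nu$ (equivalently a globally closed $\nu^{-1}\theta$) needs an argument, not just $\pi_1(M)=0$. Second, completeness of the leaves constrains the leaves, not the transverse flow of $\zeta$, so it does not by itself prevent that flow from leaving $M$ before sweeping it out; the genuine global statement is a de Rham-type decomposition whose proof in \cite{PonRec93} is more delicate. Since you explicitly fall back on the same reference the paper itself relies on, your write-up is no less complete than the paper's, but as an independent proof only the local assertion is fully established.
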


The second fundamental form $\alpha$ of $M$ is said to be \textit{adapted} to a given net $(\cE_i)_{i = 1}^r$ on $M$ if $\alpha(\cE_i , \cE_j) = 0$ for all $1 \leq i,j \leq r$ with $i \neq j$.

According to \cite{Toj16} the induced metric on a partial tube over a curve is twisted and its second fundamental form is adapted to the product net. The following theorem shows that the converse is also true. It is stated in \cite{DajRovToj15} as a consequence of the results in \cite{Toj16} for the Riemannian case, but one can check that the result holds in the Lorentzian case, as well.

\begin{theorem}\label{twisted-partialtube}
Let $f : M^n \to \R^m_s$, $s = 0 \ \text{or\ }1$, be an isometric immersion of a twisted product $M^n = N^{n-1} \times_\rho I$, where $N^{n-1}$ is an $(n-1)$dimensional semi-Riemannian manifold and $I \subset \R$ is an open interval and $\rho \in C^\infty(M)$. Assume that the second fundamental form $\alpha_f$ of $f$ is adapted to the product net. Then $f(M)$ is a partial tube over a curve.
\end{theorem}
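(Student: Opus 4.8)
The plan is to recover the partial‑tube description of $f$ by an explicit construction. Write $\cD = TN$ for the tangent distribution of the (codimension‑one, totally geodesic) foliation of $M = N^{n-1}\times_\rho I$ coming from the first factor, let $\partial_t$ be the coordinate field of the $I$‑factor, so that $\langle\partial_t,\partial_t\rangle = \epsilon\rho^2$ for a fixed sign $\epsilon = \pm1$, and set $T = \rho^{-1}\partial_t$, a unit field spanning $\cD^\perp$. First I would record, by a Koszul‑formula computation in the twisted‑product metric $g_N + \rho^2\,dt^2$, that $\nabla_X\partial_t = (X\rho/\rho)\,\partial_t$ for every $X\in\cD$, whence $\nabla_X T = 0$. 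Feeding this into the Gauss formula together with the hypothesis that $\alpha_f$ is adapted to the product net (so $\alpha_f(X,T) = 0$ for $X\in\cD$) gives $\bar\nabla_X\,df(T) = df(\nabla_X T) + \alpha_f(X,T) = 0$, where $\bar\nabla$ is the flat connection of $\R^m_s$. Thus the unit field $e := df(T)$ is constant along each leaf $L_t = N\times\{t\}$ and depends on $t$ alone.

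Next I would fix $p_0\in N$, $t_0\in I$, set $\gamma(t) := f(p_0,t)$, and (harmlessly reparametrizing $I$, which only alters $\rho$) assume $\gamma$ has unit speed, so $\gamma'(t) = e(t)$. Because $e$ is constant on leaves, for any $p\in N$ the vector $df(\partial_t)|_{(p,t)} = \rho(p,t)\,e(t)$ is a multiple of $\gamma'(t)$; hence the curve $V_p(t) := f(p,t) - \gamma(t)$ has $V_p'(t)$ everywhere proportional to $\gamma'(t)$. On the other hand, differentiating $\langle f(p,t)-\gamma(t),\gamma'(t)\rangle = \langle f(p,t)-\gamma(t),e(t)\rangle$ in an $N$‑direction $X$ gives $\langle df(X),df(T)\rangle = \langle X,T\rangle = 0$, and the expression vanishes at $p=p_0$, so $V_p(t)\in\gamma'(t)^{\perp} = \nu_\gamma(t)$ for all $t$. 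A section of the normal bundle $\nu_\gamma$ whose ordinary derivative is always tangent to $\gamma$ is $\nabla^\perp$‑parallel, so $V_p$ is parallel for the normal connection $\nabla^\perp$ of $\gamma$. Here non‑degeneracy of the foliation enters: $\langle\gamma',\gamma'\rangle = \epsilon\neq0$, so $\gamma$ is nowhere null and $\nu_\gamma$ is an honest subbundle of constant index.

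Consequently $f(p,t) = \gamma(t) + P_{t\leftarrow t_0}\!\big(f_0(p)\big)$, with $f_0 : N^{n-1}\to\nu_\gamma(t_0)$ given by $f_0(p) = f(p,t_0) - \gamma(t_0)$ and $P_{t\leftarrow t_0}$ the $\nabla^\perp$‑parallel transport. Since $\gamma$ is non‑null, $\nu_\gamma$ is a flat parallel bundle of constant index admitting a $\nabla^\perp$‑parallel orthonormal frame; restricting to the smallest $\nabla^\perp$‑parallel subbundle $\cE\subset\nu_\gamma$ through $f_0(N)$ and choosing such a frame $\{\eta_1,\dots,\eta_k\}$ of $\cE$, one gets $f(p,t) = \gamma(t) + \phi_t(f_0(p))$ with $\phi_t(y) = \sum_i y_i\eta_i(t)$ and $f_0 : N^{n-1}\to\cE(t_0)\cong\R^k_r$ an isometric immersion of non‑reducible codimension (isometric because $\phi_t$ preserves the metric and $f$ is isometric along leaves). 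That $f_0(N)\subset\Omega(\gamma;\phi)$ is automatic from $f$ being an immersion, because along this construction $\partial f/\partial t = \big(1 - \langle\phi_t(f_0(p)),\gamma''(t)\rangle\big)\gamma'(t)$. This exhibits $f(M)$ as a partial tube over $\gamma$ with fibre $f_0$.

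None of the computations above uses the sign of the metric — the only place the Lorentzian setting could interfere is nullity of $\gamma$, which is excluded — so the argument is essentially the Riemannian one of \cite{Toj16, DajRovToj15}. I expect the only genuine friction to be bookkeeping: arranging $\gamma$ to be unit speed compatibly with the twist function $\rho$, carrying out the reduction to non‑reducible codimension for $f_0$, and — if one wants a global rather than merely local statement — checking that $f_0$ is well defined when the leaves of $\cD$ are not simply connected, which is why a local formulation is the natural one.
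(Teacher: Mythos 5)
Your construction is essentially correct, and it is worth noting that the paper itself gives no proof of this statement: it simply quotes the Riemannian result of \cite{DajRovToj15}, which rests on the general decomposition theorem of \cite{Toj16} for immersions with second fundamental form adapted to a net, and asserts that ``one can check'' the Lorentzian case. Your argument is therefore a genuinely different (and more self-contained) route: instead of invoking Tojeiro's machinery, you verify directly that $df(\rho^{-1}\partial_t)$ is parallel along the leaves, that $f(p,t)-\gamma(t)$ is a $\nabla^\perp$-parallel normal field along the non-null curve $\gamma(t)=f(p_0,t)$, and then read off the partial-tube parametrization; this is exactly the kind of explicit check the paper leaves to the reader, and it makes transparent where non-degeneracy of the foliation (hence of $\gamma'$) is used. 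Two small points deserve care. First, in the Lorentzian case, when $\gamma$ is timelike the computation gives $\partial f/\partial t=\bigl(1-\epsilon\,g(\phi_t(f_0(p)),\gamma''(t))\bigr)\gamma'(t)$ with $\epsilon=g(\gamma',\gamma')=\pm1$, so the membership in $\Omega(\gamma;\phi)$ as literally defined in the paper involves this sign; since $\partial f/\partial t=\rho\,e\neq0$ the immersion condition holds anyway, but the bookkeeping should record $\epsilon$. Second, in reducing to the subbundle $\cE$ you must ensure $\cE$ is non-degenerate (the paper's definition requires an orthonormal parallel frame): the linear span of $f_0(N)$ in $\nu_\gamma(t_0)$ may be a degenerate subspace in the Lorentzian setting, so one should take $\cE$ to be a non-degenerate parallel subbundle containing it (in the worst case all of $\nu_\gamma$, which is flat and non-degenerate because $\gamma$ is non-null) before performing the reduction to non-reducible codimension; with that adjustment, and the implicit assumption that $N$ is connected so that leafwise-constant quantities are genuinely constant, the proof goes through.
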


Let $f:M^n \to \bar{M}^{n+1}(c)$ be an isomteric immersion from a semi-Riemannian manifold $M$ into the space form $\bar{M}^{n+1}(c)$ of constant curvature $c$. We denote by $\alpha_f$ the second fundamental form of $f$ and by $S$ its shape operator on $M$. 

The distribution $\cT$ defined by $\cT_p = \{ x \in T_pM : \alpha_f(x , y) = 0, \forall y \in T_pM \}$ for every $p \in M$, is known as the \textit{relative nullity distribution} on $M$. The relative nullity of $f$ at a $p$ is the dimension $\nu(p)$ of $\cT_p$ and the minimum value of $\nu(p)$ on $M$, denoted by $\nu_0$, is called the \textit{index of relative nullity}.

Some properties of the relative nullity distribution are given in the following theorem.

\begin{theorem}[\cite{AbeMag86}]\label{W-properties}
Let $W$ denote the set of points in $M$ with the minimum relative nullity $\nu_0$. Then
\begin{itemize}
\item[$(i)$]
$W$ is an open subset of $M$;

\item[$(ii)$]
$\cT$ defines a smooth and involutive distribution in $W$;

\item[$(iii)$]
$\cT$ is a totally geodesic distribution in $M$; and

\item[$(iv)$] 
each leaf of $\cT$ is an immersed totally geodesic submanifold of $\bar{M}$.
\end{itemize}
\end{theorem}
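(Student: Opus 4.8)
The plan is to prove the four assertions in order; $(i)$ and $(ii)$ are soft, while $(iii)$ and $(iv)$ carry the content. I use two standard facts throughout: since $f$ has codimension one, the relative nullity space at $p$ equals $\cT_p=\ker S_p$; and since $\bar{M}$ has constant curvature, the Codazzi equation of $f$ reads $(\nabla_X S)Y=(\nabla_Y S)X$.

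For $(i)$, the function $p\mapsto\operatorname{rank}S_p$ is lower semicontinuous, so $\nu=n-\operatorname{rank}S$ is upper semicontinuous; as $\nu_0$ is its minimum, $W=\{p:\nu(p)\le\nu_0\}$ is open. For $(ii)$, on $W$ the rank of $S$ is constant, hence $\cT=\ker S$ is a smooth subbundle there. Involutivity follows from Codazzi: for local sections $T_1,T_2$ of $\cT$ one has $ST_1=ST_2=0$ identically, so $(\nabla_{T_i}S)T_j=-S\nabla_{T_i}T_j$, and $(\nabla_{T_1}S)T_2=(\nabla_{T_2}S)T_1$ becomes $S\nabla_{T_1}T_2=S\nabla_{T_2}T_1$, that is $S[T_1,T_2]=0$, so $[T_1,T_2]\in\cT$.

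For $(iii)$ and $(iv)$, fix a leaf $L$ of $\cT$ (available by $(ii)$), and suppose first that $\cT$ is non-degenerate, so that on $W$ we may split $TM=\cT\oplus\cT^\perp$. Since $S$ is self-adjoint with kernel $\cT$ one has $\operatorname{Im}S=\cT^\perp$, and non-degeneracy of $\cT$ then makes $S|_{\cT^\perp}\colon\cT^\perp\to\cT^\perp$ an isomorphism. Take sections $T$ of $\cT$ and $Z$ of $\cT^\perp$. By Codazzi, $(\nabla_T S)Z=(\nabla_Z S)T=-S\nabla_Z T$, and this lies in $\operatorname{Im}S=\cT^\perp$; hence the $\cT$-component of $(\nabla_T S)Z=\nabla_T(SZ)-S\nabla_T Z$ vanishes. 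Now $SZ$ is a section of $\cT^\perp$, i.e.\ normal to $L$ in $M$, so by the Weingarten formula for $L\subset M$ the $\cT$-component of $\nabla_T(SZ)$ is $-A^L_{SZ}T$, where $A^L$ denotes the shape operator of $L$ in $M$, while $S\nabla_T Z\in\operatorname{Im}S=\cT^\perp$ has zero $\cT$-component. Therefore $A^L_{SZ}T=0$ for all such $T,Z$, and since $S|_{\cT^\perp}$ is onto, $A^L_N T=0$ for every normal direction $N$ of $L$; thus the second fundamental form $\beta_L$ of $L$ in $M$ vanishes identically and $L$ is totally geodesic in $M$, which is $(iii)$. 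For $(iv)$, the second fundamental form of the isometric immersion $f|_L\colon L\to\bar{M}$ (the composition of the inclusion $L\hookrightarrow M$ with $f$) is $\alpha_{f|_L}(T_1,T_2)=f_*\beta_L(T_1,T_2)+\alpha_f(T_1,T_2)$ for $T_1,T_2\in TL=\cT$; here $\beta_L=0$ by $(iii)$ and $\alpha_f(T_1,T_2)=0$ since $T_1\in\cT$, so $f|_L$ is totally geodesic and its image is an immersed totally geodesic submanifold of $\bar{M}$.

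The hard part is the case in which $\cT$ is degenerate at some point of $W$ — exactly the situation this paper must confront in $\L^{n+1}$. There the orthogonal splitting $TM=\cT\oplus\cT^\perp$ and the surjectivity of $S|_{\cT^\perp}$ both collapse, since $\cT\cap\cT^\perp$ is the radical of $\cT$, so the projection argument fails as stated. One then replaces $\cT^\perp$ by a suitable non-degenerate complement of $\cT$ — a screen distribution — and controls $S$ along curves tangent to $\cT$ through the Codazzi equation, again invoking the constancy of $\nu_0$ on $W$; this is the delicate step, carried out in \cite{AbeMag86}. Completeness of the leaves is not asserted here; it enters later via Theorem~\ref{twisted}.
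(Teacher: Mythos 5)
The paper itself gives no proof of this statement---it is quoted directly from \cite{AbeMag86}---so there is no in-paper argument to compare against and your attempt has to stand on its own. Parts $(i)$ and $(ii)$ are correct, and your Codazzi computation does establish $(iii)$ and $(iv)$ wherever $\cT$ is non-degenerate. The genuine gap is exactly where you stop: at points where $\cT$ is degenerate you observe that the splitting $TM=\cT\oplus\cT^\perp$ and the surjectivity of $S|_{\cT^\perp}$ collapse, and you hand the ``delicate step'' back to \cite{AbeMag86} without proving it. That case is not peripheral in this paper: in $\L^{n+1}$ the relative nullity can be degenerate on all of $U$ (this is precisely why Proposition~\ref{t.g screen distribution} constructs a totally geodesic screen distribution), so a proof covering only the non-degenerate case omits the very situation the theorem is invoked for.

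Moreover, the missing case needs no screen distribution or new machinery; a rearrangement of your own computation closes it uniformly. For $T_1,T_2\in\Gamma(\cT)$ and arbitrary $X$, Codazzi gives $\nabla_{T_1}(SX)=S\nabla_{T_1}X-S\nabla_X T_1\in\im S$, and since $g(T_2,SY)=g(ST_2,Y)=0$ for every $Y$ one gets $g(\nabla_{T_1}T_2,SX)=T_1\,g(T_2,SX)-g(T_2,\nabla_{T_1}(SX))=0$, i.e.\ $\nabla_{T_1}T_2\perp\im S$. Now $\im S=\cT^\perp$ holds at every point of $W$ regardless of degeneracy of $\cT$: the inclusion $\im S\subset\cT^\perp$ follows from self-adjointness, and the dimensions agree because $g$ is non-degenerate on $T_pM$. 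Since also $(\cT^\perp)^\perp=\cT$, we conclude $\nabla_{T_1}T_2\in\cT$, which is $(iii)$ with no case distinction and no projection onto a normal bundle of the leaf. Then $(iv)$ follows as you argue, but phrased without a tangent--normal splitting: $\bar{\nabla}_{T_1}f_*T_2=f_*\nabla_{T_1}T_2+\alpha_f(T_1,T_2)=f_*\nabla_{T_1}T_2\in f_*\cT$, so each leaf is auto-parallel in $\bar{M}$, which is the correct reading of ``totally geodesic'' when the leaf metric is degenerate and no normal projection is available. Your closing remark that completeness of the leaves is not part of the statement is correct.
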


Moreover, it is also proved in \cite{AbeMag86} that if $M$ is complete then the leaves of the relative nullity distribution in $W$ are complete, as well.

%%%%%%%%%%%%%%%%%%%%%%%%%%%%%%%%%%%%%%
%%%%%%%       RESUTS           %%%%%%%%%%%%%%%%%%%%%%
%%%%%%%%%%%%%%%%%%%%%%%%%%%%%%%%%%%%%%

\section{Main Result} \label{ProofMainTheorem}
In this section we assume that $f : M^n \to \L^{n+1}$ is a connected nowhere flat isometric immersion and $\cD$ is a semi-Riemannian totally geodesic distribution of codimension one on $M$ with complete leaves. For simplicity, we use the same notation $g$ for the metric tensors on $\L^{n+1}$ and $M$ and the induced metrics on their submanifolds. 

We first prove that as in the Euclidean case there are three possibilities at any point of $M$: the second fundamental form $\alpha_f$ is adapted to $(\cD , \cD^\perp)$ or the relative nullity space of $\alpha_f$ intersects $\cD$ in a subspace of codimension $0$ or $1$. Indeed, since $\cD$ is totally geodesic it follows from the Gauss formula that $g(SX_1 , X_3)g(SX_2 , Y) = g(SX_2 , X_3)g(SX_1 , Y)$ for all $X_1, X_2, X_3 \in \cD$ and $Y \in \cD^\perp$, from which one can deduce that the dimension of $S(\cD)$ is at most one. The above mentioned possibilities occurrence depend on whether $S(\cD)$ is a subspace of $\cD$ or $\cD^\perp$, or is transversal to them. 

Similar to \cite{DajRovToj15} it follows from Theorems~\ref{twisted},~\ref{twisted-partialtube} that if $\alpha_f$ is adapted to $(\cD , \cD^\perp)$ at every point, then $f(M)$ is a partial tube over a curve in $\L^{n+1}$. On the other hand, if $\alpha_f$ vanishes on $\cD$ at every point, then the leaves of $\cD$ turns out to be totally geodesic in $\L^{n+1}$, namely, $f(M)$ is ruled.

So, let the open subset $U \subset M$ of the points at which $\alpha_f$ is neither adapted to $(\cD , \cD^\perp)$ nor vanishes identically on $\cD$, be non-empty. We prove that $U$ contains a $2$-dimensional strip if the relative nullity distribution $\cT$ is not degenerate at every point in $U$. In this case the surface factor of the strip is Riemannian when $M$ is Riemannian and it can be Riemannian or Lorentzian when $M$ is Lorentzian. Another case that can happen when $M$ and the distribution $\cD$ are both Lorentzian is that $\cT$ is degenerate at every point in $U$. In this case, we show that $U$ contains a $3$-dimensional Lorentzian strip. 

Possible forms of the matrix of the shape operator of $M$ are given at the end of the section. One can see that if at a given point the matrix of $S$ is of the form $(ii)$ or $(iv)$ the relative nullity can not be $n-1$ and when it has the form $(i)$ or $(iii)$ then $M$ is flat at the point. So, since $M$ is nowhere flat, the relative nullity space is a subspace of $\cD$ of dimension $n-2$ at any point of $U$ and consequently $U$ is a subset of $W$, where as in the previous section $W \subset M$ denotes the set of the points with minimum relative nullity.  It then follows from a proof similar to Lemma~9 in \cite{DajRovToj15}, that the leaves of $\cD$ and $\cT$ in $U$ are flat and every leaf of $\cT$ intersecting $U$ lies entirely inside $U$.

Denote by $G$ the set of points in $W$ at which the relative nullity distribution is non-degenerate. Then we have:
\begin{proposition}\label{G-open}
$G$ is an open subset of $W$.
\end{proposition}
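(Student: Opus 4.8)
The plan is to reduce the claim to the elementary fact that non-degeneracy of the restriction of a semi-Riemannian metric to a smoothly varying family of subspaces of constant dimension is an open condition. The only structural input needed is that $\cT$ is a genuine smooth distribution on $W$, which is exactly Theorem~\ref{W-properties}: $W$ is open in $M$ and $\cT|_W$ is smooth of constant rank $\nu_0$. Consequently every $p_0 \in W$ has a connected open neighbourhood $V \subseteq W$ on which $\cT$ admits a smooth frame $E_1, \dots, E_{\nu_0}$.

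I would then fix $p_0 \in G$ together with such a neighbourhood $V$ and frame, and consider the smooth function
\[
\Delta : V \to \R, \qquad \Delta(p) = \det\bigl( g(E_i(p), E_j(p)) \bigr)_{1 \le i,j \le \nu_0}.
\]
For each $p \in V$, the subspace $\cT_p$ is non-degenerate (equivalently, $\cT_p \cap \cT_p^\perp = \{0\}$) if and only if $\Delta(p) \neq 0$. This condition does not depend on the chosen frame: replacing $E_1, \dots, E_{\nu_0}$ by $E'_i = \sum_j A_{ij} E_j$ with $A$ a pointwise-invertible matrix of smooth functions changes $\Delta$ to $(\det A)^2\,\Delta$, and $\det A$ is nowhere zero on $V$. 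Since $p_0 \in G$ we have $\Delta(p_0) \neq 0$, so by continuity $\Delta$ is nonzero on some open neighbourhood $V' \subseteq V$ of $p_0$; hence $\cT_p$ is non-degenerate for every $p \in V'$, i.e.\ $V' \subseteq G$. As $p_0 \in G$ was arbitrary, $G$ is open in $W$.

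There is no genuine obstacle here: the argument is the semi-Riemannian analogue of the fact that invertibility is an open condition on matrices, and the only point that could conceivably fail --- the existence of a smooth local frame for $\cT$ near an arbitrary point of $W$ --- is exactly what Theorem~\ref{W-properties}(ii) supplies. The role of the proposition is simply to legitimise passing to open neighbourhoods in the case analysis that follows.
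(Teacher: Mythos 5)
Your argument is correct, but it is not the route the paper takes. You prove openness intrinsically: using Theorem~\ref{W-properties}(ii) to get a smooth local frame $E_1,\dots,E_{\nu_0}$ of $\cT$ on $W$, you observe that non-degeneracy of $\cT_p$ is equivalent to the non-vanishing of the Gram determinant $\Delta(p)=\det\bigl(g(E_i(p),E_j(p))\bigr)$, a smooth frame-independent condition (up to the factor $(\det A)^2$), so $G$ is open by continuity. The paper instead argues extrinsically through the shape operator: since $S$ is self-adjoint with $\cT=\ker S$, one has $\im S=\cT^{\perp}$, so $\cT_p$ is degenerate exactly when $\ker S\cap\im S\neq 0$, i.e.\ when $\ker S^2\supsetneq\ker S$; in the situation at hand (relative nullity $n-2$, so $\operatorname{rank}S=2$ on $W$) this identifies $G$ as the set of points of $W$ where $\operatorname{rank}S^2>1$, which is open by lower semicontinuity of the rank of the smooth tensor $S^2$. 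Your approach is the more elementary and more general one --- it works for any smooth constant-rank distribution on a semi-Riemannian manifold and makes no use of the hypersurface structure --- while the paper's approach buys an algebraic characterization of $G$ in terms of $S^2$ that fits its subsequent analysis of the possible normal forms of the shape operator. Both proofs are complete for the statement as given.
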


\begin{proof}
Note that $\im S$, the image of the shape operator $S$ of $M$, coincides with the orthogonal complement of $\cT$ in $TM$. So, $\cT$ is non-degenerate if and only if $\im S$ intersects $\cT$ trivially, or equivalently when $\ker S^2 = \ker S$ which implies that $G$ consists of the points in $W$ with $rank S^2 > 1$.
\end{proof}

The distribution $Rad(\cT) = \cT \cap \cT^\perp$ is a totally null distribution on $W$, namely, $g(X,Y) = 0$ for all $X, Y \in Rad(\cT)$. A screen distribution $\cS$ on $W$ is a complementary distribution of $Rad(\cT)$ in $\cT$, that is, $\cT = \cS \oplus Rad(\cT)$. As we will see cases $f(M)$ contains a strip when there exists a totally geodesic screen distribution on an open subset of $W$. If $G$ is non-empty then $Rad(\cT)$ is trivial on $G$ and according to Theorem~\ref{W-properties}, $\cS = \cT$ is totally geodesic. In Proposition~\ref{t.g screen distribution} we make a totally geodesic screen distribution on an open subset of $W$ when $G$ is empty. 

In the proof of Proposition~\ref{t.g screen distribution} we use a generalized version of the so-called \textit{conullity operator} which we define as follows: let $N$ be a hypersurface in a semi-Riemannian manifold $\bar{M}(c)$ of constant sectional curvature $c$ and let $\cS$ be a totally geodesic sub-distribution of the relative nullity distribution of $N$ with a complement $\cC$ on a neighborhood of a point $p$ in $N$. Denote by $P_\cS$ and $P_\cC$ projections onto $\cS$ and $\cC$ in the decomposition $TN = \cS \oplus \cC$, respectively. Given a vector field $Z$ tangent to $\cS$, we define the linear operator $\c_Z(p) : T_pN \to \cC_p$ by 
\[
\c_Z(p)(X) = - P_\cC (\tilde{\nabla}_XZ) \ ,
\]
where $\tilde{\nabla}$ denotes the induced connection on $N$. The operator $\c$ is a tensor and for $\cS = \cT$ it is known as the conullity operator (see \cite{AbeMag86} and \cite{Fer70b}, for example).

Suppose that $\gamma(t)$, $- \infty < t < + \infty$, is a geodesic in a leaf of $\cS$ passing through $p$ and $Z$ is an extension of the tangent vector field $\gamma'$ of $\gamma$. Similar to the special case $\cS = \cT$ in \cite{AbeMag86}, one can see that the operator $\c_Z$ satisfies a Ricatti type differential equation 
\begin{equation}\label{Ricatti Eq.}
\frac{dy}{dt} = y^2 + c g(Z({\gamma(t)}) , Z({\gamma(t)}))
\end{equation}
along $\gamma$. If the term $c g(Z({\gamma(t)}) , Z({\gamma(t)}))$ vanishes for all $t$, then $\c_Z = 0$ is the only globally defined solution of the equation on $\gamma$.

\begin{proposition}\label{t.g screen distribution}
Under the hypothesis of Theorem~\ref{MainTheorem}, let $G$ be empty. Then there exists a totally geodesic screen distribution on $U$.
\end{proposition}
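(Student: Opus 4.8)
The plan is to prove that, under the hypothesis $G=\emptyset$, the relative nullity distribution $\cT$ is in fact \emph{parallel} on $U$, and then to obtain the desired screen from the resulting rigid description of $\cT$ along the leaves of $\cD$.

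As a preliminary step I would record the geometry forced by $G=\emptyset$. Since $G$ is empty, $\cT$ is degenerate at every point of $W$, hence of $U$; because $\cT\subset\cD$ and $\cD$ is non-degenerate of index at most one, this forces $\cD$ (and $M$) to be Lorentzian and $Rad(\cT)=\cT\cap\cT^\perp$ to be a one-dimensional null subbundle, and one also gets $Rad(\cT)=\cT^\perp\cap\cD$ and $\cT^\perp=Rad(\cT)\oplus\cD^\perp$. By Theorem~\ref{W-properties}, every leaf of $\cT$ meeting $U$ lies entirely in $U$, is flat, and is an open piece of a degenerate affine $(n-2)$-plane of $\L^{n+1}$ contained in a complete flat leaf of $\cD$; as in \cite{AbeMag86, DajRovToj15} this yields that the geodesics of such leaves are defined on all of $\R$.

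Next I would apply the generalized conullity operator with $\cS=\cT$, which is totally geodesic by Theorem~\ref{W-properties}, together with any complement $\cC$. Fix $p\in U$ and a complete geodesic $\gamma$ of the leaf of $\cT$ through $p$, and let $Z$ extend $\gamma'$. Then $\c_Z$ satisfies the Riccati equation \eqref{Ricatti Eq.} along $\gamma$, whose curvature term $c\,g(Z,Z)$ vanishes identically because $c=0$ in $\L^{n+1}$; hence $\c_Z\equiv0$ along $\gamma$. Since $\c$ is tensorial and the tangent vectors of such geodesics span $\cT_p$, we obtain $\c_Z(p)=0$ for all $Z\in\cT_p$, i.e. $\tilde\nabla_XZ\in\cT$ for every $X\in T_pM$ and every $Z\in\cT$. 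Thus $\cT$ is parallel on $U$; and since $\cD$ is totally geodesic, $\cT$ restricts to a parallel distribution on each flat leaf $L$ of $\cD$. Being parallel in the flat manifold $L$, this restriction is locally constant, so the leaves of $\cT$ inside $L$ are parallel null hyperplanes and $Rad(\cT)|_L$ is a constant null line of $L$.

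Finally I would build the screen. Choose locally a smooth non-vanishing null section $\xi$ of $Rad(\cT)$, and a smooth null section $\zeta$ of $\cD$ transversal to $\cT$ with $g(\xi,\zeta)=1$, chosen so that $\cS:=\cT\cap\zeta^\perp$ is parallel along each leaf of $\cD$ — possible since, by the previous step, $\cT$ and $Rad(\cT)$ are already constant along such leaves. Then $\cS$ is a smooth spacelike rank-$(n-3)$ complement of $Rad(\cT)$ in $\cT$, that is, a screen. On each leaf $L$ of $\cD$ the subbundle $\cS|_L$ is parallel in $L$, hence totally geodesic in $L$; and since $\cD$ is totally geodesic in $M$, for $X,Y\in\cS\subset\cD$ the covariant derivative $\tilde\nabla_XY$ computed in $M$ coincides with the one computed in $L$, so $\tilde\nabla_XY\in\cS$. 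Therefore $\cS$ is a totally geodesic screen distribution on $U$. The hard part of this argument is the completeness input used in the second step — that the geodesics of the nullity leaves through points of $U$ extend to all of $\R$ — since it is precisely this that lets the Riccati rigidity ``$c=0\Rightarrow\c_Z\equiv0$'' apply; once $\cT$ is known to be parallel, the leaf-wise description of $\cT$ and the total geodesy of $\cD$ make the rest routine.
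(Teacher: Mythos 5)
Your overall strategy (use the generalized conullity operator, the Riccati equation \eqref{Ricatti Eq.} with $c=0$, and completeness of the nullity leaves to force rigidity, then read off a screen) is in the spirit of the paper, but the crucial step is carried out in a setting where it breaks down. You apply the Riccati argument to the decomposition $TM=\cT\oplus\cC$, where $\cC$ has rank two, and conclude that global existence along complete nullity geodesics forces $\c_Z\equiv 0$. That deduction is only valid for the \emph{scalar} equation $y'=y^2$. For the operator-valued equation $\c'=\c^2$ on a complement of rank $\geq 2$, the solution through $\c_0$ is $\c(t)=\c_0(I-t\c_0)^{-1}$, which is defined for all $t\in\R$ whenever $\c_0$ has no nonzero real eigenvalues; in particular any constant nilpotent $\c_0\neq 0$ is a globally defined nonzero solution. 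So completeness only rules out nonzero real eigenvalues, not the operator itself, and nilpotent conullity operators are exactly what one expects in the ruled/degenerate situations that the hypothesis $G=\emptyset$ is meant to capture. Consequently your key intermediate claim ``$\cT$ is parallel on $U$ (in $TM$)'' is not established, and everything after it (constancy of $\cT$ and $Rad(\cT)$ along the leaves of $\cD$, the choice of $\zeta$ making $\cS=\cT\cap\zeta^\perp$ parallel) rests on it.

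The paper's proof avoids precisely this pitfall by never working with a rank-two complement: it restricts to a leaf $L$ of $\cD$ (flat and complete, with the relative nullity of $f$ and of $f|_L$ coinciding on $L$), constructs a null field $\hat{\xi}$ transversal to the nullity leaves by parallel transport along a transversal curve and along the nullity leaves, and applies the Riccati argument to the decomposition $TL=\cT\oplus\spann\{\hat{\xi}\}$, where the complement is one-dimensional and the equation really is scalar, so global existence does give $\c_Z=0$. This yields parallelism of $\cT$ and of $Rad(\cT)=\spann\{\xi\}$ only \emph{along $L$}, which is all that is needed: the screen is then defined as the orthogonal complement of the Lorentz plane $\spann\{\xi,\hat{\xi}\}$ in $TL$, its total geodesy uses that $\hat{\xi}$ is parallel along the nullity leaves, and the construction is propagated across the leaves of $\cD$ along an integral curve of a unit field $Y\in\cD^\perp$. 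To repair your argument you would have to either reduce, as the paper does, to a one-dimensional complement inside each leaf of $\cD$, or supply an independent argument excluding nilpotent (non-vanishing) conullity operators — which cannot be done in general, since that is the generic behaviour in the degenerate case.
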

\begin{proof}
By assumption it turns out that $Rad(\cT)$ is a one dimensional totally null distribution on $M$ which at every point determines the unique one dimensional degenerate subspace of $\cT$. 

The relative nullity distribution $\cT$ is smooth and involutive with complete totally geodesic leaves. So, for a given leaf $L$ of $\cD$, the leaves of $\cT$ passing through the points of $L$ form a codimension one complete degenerate foliation on $L$. Since $L$ is totally geodesic in $M$ and $\cT_p \subset \cD_p$ for all $p \in U$, it follows that the relative nullity distributions of the immersions $f : M \to \L^{n+1}$ and of $f|_L : L \to \L^{n+1}$ coincide on $L$.

Let $p$ be a point in $L$ and $K_p$ denotes the leaf of $\cT$ through $p$. Let $\gamma(t)$ be a regular curve in $L$ defined on $I_\epsilon = (-\epsilon, +\epsilon)$ for some $\epsilon > 0$, which starts at $p$ and be transversal to the leaves of $\cT$. Fix a null vector field $\xi \in Rad(\cT)$ and take a null vector $\hat{\xi}(p) \in \cD_p$ transversal to $K_p$ such that $g(\xi(p) , \hat{\xi}(p)) = 1$. Parallel translate the vector $\hat{\xi}(p)$ along $\gamma(t)$ and then extend it to a vector field $\hat{\xi}$ in a neighborhood of $p$ (in $L$) by parallel translating each $\hat{\xi}({\gamma(t)})$ along the leaf $K_{\gamma(t)}$ of $\cT$ through the point $\gamma(t)$. Note that since $L$ and the leaves of $\cT$ are flat, all parallel displacements are locally path independent. By replacing $\gamma(t)$ with an integral curve of $\hat{\xi}$ we may assume $\gamma'(t)=\hat{\xi}({\gamma(t)})$ for every $t \in I_\epsilon$.

Let $Z$ be a vector field tangent to $\cT$ and $\delta$ be the geodesic in $K_p$ starting at $p$ with $\delta'(0) = Z_p$. Then one has  
\[
\nabla'_XZ = P_\cT(\nabla'_XZ) - \c_ZX 
\]
for all $X \in TL$, where $\nabla'$ is the induced Levi-Civita connection on $L$ and $\c_Z$ is the conullity operator with respect to the decomposition $TL = \cT \oplus \spann\{\hat{\xi}\}$. Since $\c_Z$ satisfies equation~\eqref{Ricatti Eq.} along $\delta$ and knowing that $L$ is flat and $K_p$ is geodesically complete, it follows that $\c_Z=0$. So the parallel displacement of $Z$ is always tangent to the leaves of $\cT$ and for every $X \in TL$ and $Z \in \cT$ we have
\begin{align*}
0 & = X(g(\xi , Z))\\
& = g(\nabla'_X \xi , Z) + g(\xi , \nabla'_XZ) \\
& = g(\nabla'_X \xi , Z)
\end{align*}
which implies that $Rad(\cT) = \spann\{\xi\}$ is a parallel subbundle of $TL$. This allows us to assume that $\xi$ is a parallel vector field along $L$. Let $\cS$ be the orthogonal complement of the Lorentz plane $\spann\{\xi, \hat{\xi}\}$ in $TL$. Then $\cS$ is an $(n-3)$-dimensional space-like distribution in a neighborhood of $p$ in $L$ and we have $\cT = \cS \oplus Rad(\cT)$ since if $a \xi(q) + b \hat{\xi}(q) + v \in \cT_q$ with $a, b \in \R$ and $v \in \cS_q$ for any point $q$ in the neighborhood of $p$, then $g(\xi(q) , a \xi(q) + b \hat{\xi}(q) + v) = 0$ yields $b=0$. 

We saw that for any $X \in TL$ and $Z \in \cS$, $\nabla'_XZ = P_\cT(\nabla'_XZ) \in \cT$. On the other hand, since $\hat{\xi}$ is parallel along the leaves of $\cT$, if $X \in \cS$ then one gets $g(\nabla'_XZ , \hat{\xi}) = - g(Z , \nabla'_X\hat{\xi}) = 0$. So, it follows that $\nabla'_XZ \in \cS$ and thus $\cS$ is a totally geodesic screen distribution on $L$ in a neighborhood of $p$.

Let $Y \in \cD^\perp$ be a unit vector field in a neighborhood of the point $p$ in $U$ and $\theta(s)$ be the integral curve of $Y$ through $p$. Suppose that $\xi$ and $\hat{\xi}$ are smooth null vector fields along $\theta$ such that $g(\xi , \hat{\xi}) = 1$.  We use the above method to extend $\xi(s)$ and $\hat{\xi}(s)$ along the leaf $L_{\theta(s)}$ for each $s$. Then the distribution $\cS \subset TU$ defined locally as the orthogonal complement of $\spann\{Y, \xi, \hat{\xi} \}$ is a totally geodesic screen distribution on $U$. 
\end{proof}

We write $G$ as the disjoint union $G = G_R \cup G_L$ where $G_R$, respectively $G_L$, denotes the set of points in $G$ with Riemannian, respectively Lorentzian, relative nullity and define the open subset $V \subset M$ as follows: if $U \cap G$ is non-empty then $V$ is given by $V = U \cap G_R$ or $V = U \cap G_L$ depending on whether $U \cap G_R$ or $U \cap G_L$ is non-empty, respectively, and $V = U$ if the $U$ and $G$ do not intersect. Let $\cS$ be a totally geodesic screen distribution on $V$ and put $\cC = \cS^\perp$. Note that the leaves of $\cS$ are geodesically complete. In fact, if $\gamma$ is a geodesic in a leaf of $\cT$ such that $\cT_{\gamma(t)}$ is non-degenerate for some $t$, then since $\cT$ is parallel along $\gamma$ it also remains non-degenerate, with the same causality, at every point of the geodesic. Then it follows from completeness of $W$ that the leaves of $\cT$ in $G$ and consequently in $V$ (when $U \cap G$ is non-empty) are complete, as well. On the other hand, if $U \cap G = \emptyset$, then as a totally geodesic sub-distribution of $\cT$, $\cS$ has complete leaves in $U$.  

For vector fields $X \in TM$ and $Z \in \cS$ on $V$ we have $\nabla_XZ = P_\cC(\nabla_XZ) + \c_ZX$, where $\c_Z$ is the conullity operator with respect to $TM = \cS \oplus \cC$. Since the leaves of $\cS$ are complete and $\L^{n+1}$ is flat we get $\c_Z = 0$. So, $\nabla_XZ \in \cS$ and therefore $\cS$ is parallel in $TM$. Then it follows from the equation $\bar{\nabla}_Xf_*Z = f_*(\nabla_XZ) + \alpha_f(X , Z) = f_*(\nabla_XZ)$ that $f_*\cS$ is parallel in $f^*T\L^{n+1}$. Moreover, for $Z \in \cS$ and $X,Y \in \cC$ one gets $Yg(X , Z) = g(\nabla_YX , Z) + g(X , \nabla_YZ) = 0$ which since $\nabla_YZ \in \cS$ yields $g(\nabla_YX , Z) = 0$. This means that $\cC$ is totally geodesic. Now it follows that $V$ is isometric to $\R^{n-k} \times L^k$ and the immersion $f$ decomposes into $f = id \times g$ where $L^k$ in each connected component of $V$ is a maximal integral leaf of $\cC$ of dimension $k = rank \cC$ and $g: L^k \to \R^{k+1}_s$, $s = 0, 1$, is an isometric immersion. Indeed, when $U \cap G \neq \emptyset$, then at least one of $U \cap G_R$ or $U \cap G_L$ is also non-empty. When $V = U \cap G_R$ is non-empty we have $id  = id_{\L^{n-2}}$ and $g : L^2 \to \R^3$ is a Riemannian surface in the above decomposition of $f$ on $V$, while if $V = U \cap G_L$ is non-empty, then $id  = id_{\R^{n-2}}$ and $g : L^2 \to \L^3$ is a Lorentzian surface. Finally, if $V = U$ then $id = id_{\R^{n-3}}$ and $g : L^3 \to \L^4$ is a Lorentzian hypersurface in $\L^4$. Here is the end of the proof of Theorem~\ref{MainTheorem}.

The given proof of Theorem~\ref{MainTheorem} is much involved and it can be used to get other results in relevant contents. In the following we consider embedded, Einstein or conformally flat hypersurfaces in the Minkowski space. 
\begin{theorem} \label{embedded}
Let $M^n \subset \L^{n+1}$ be an embedded semi-Riemannian hypersurface that carries a semi-Riemannian totally geodesic foliation of codimension one with complete leaves. Then $M$ is a partial tube over a curve.
\end{theorem}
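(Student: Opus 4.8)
The plan is to go through the four alternatives of Theorem~\ref{MainTheorem} and show that, under the embeddedness hypothesis, only alternative $(ii)$ can occur. If $M$ is flat we are done by the classification of flat hypersurfaces of $\L^{n+1}$ (cf.\ \cite{Gra79b}): for $n\ge 3$ such an $M$ is a cylinder over a plane curve or over a hypersurface of a lower-dimensional Minkowski space, and both are partial tubes over a curve. So assume $M$ is nowhere flat. From the discussion opening Section~\ref{ProofMainTheorem}, together with nowhere-flatness, at every point of $M$ either $\alpha_f$ is adapted to $(\cD,\cD^\perp)$, i.e.\ $S(\cD)\subseteq\cD$, or $S(\cD)$ is a line not contained in $\cD$; in the second case $\cT=\ker S$ has dimension $n-2$ and lies in $\cD$, and the $\cD$-leaf through the point is flat, because its second fundamental form in $\L^{n+1}$ equals $\alpha_f|_{\cD}$, of rank at most one — the $\cD^\perp$-component vanishing since $\cD$ is totally geodesic in $M$. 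Let $A$ be the open set of points of the second type. If $A=\emptyset$, then $\alpha_f$ is adapted everywhere and Theorems~\ref{twisted} and~\ref{twisted-partialtube} give at once that $f(M)$ is a partial tube over a curve, finishing the proof. So suppose $A\neq\emptyset$; we show that this still forces $f(M)$ to be a partial tube over a curve, using injectivity of $f$ to rule out the remaining configurations.

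On $A$ the $\cD$-leaves are complete and flat, so every geodesic of such a leaf that is initially tangent to the totally geodesic subdistribution $\cT$ remains tangent to $\cT$; hence the leaves of $\cT$ are complete, and by Theorem~\ref{W-properties}$(iv)$ each is a complete totally geodesic submanifold of $\L^{n+1}$, i.e.\ a complete affine $(n-2)$-plane. On the part of $A$ where $S(\cD)=\cD^\perp$ (the ruled locus) the $\cD$-leaves themselves are complete affine $(n-1)$-planes; on the complement, which lies in the set $U$ of the proof of Theorem~\ref{MainTheorem}, that proof (via the totally geodesic screen distribution $\cS$ of Proposition~\ref{t.g screen distribution}) produces a splitting $\R^{n-k}\times L^{k}$, $k\in\{2,3\}$, whose $\cS$-leaves are mutually parallel complete affine $(n-k)$-planes. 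In every case, since $f$ is an embedding these families of complete affine subspaces — and, with them, distinct $\cD$-leaves — have pairwise disjoint images.

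The heart of the argument is the affine geometry of these disjoint, foliating families. Locally on $A$ the parallelism of $\cS$ — proved in the Main Theorem's proof by the vanishing of the conullity operator along the complete leaves of $\cS$, a consequence of the Riccati equation~\eqref{Ricatti Eq.} in flat ambient space — exhibits a neighborhood as a metric product $\R^{j}\times\Sigma$ with $\Sigma$ an embedded, nowhere-flat hypersurface of $\L^{3}$ or $\L^{4}$; if $j\ge 1$ this is a cylinder over a hypersurface of $\L^{n}$, hence already a partial tube over a line. The embeddedness of $f$ then lets one globalize: a nonzero infinitesimal rotation of the leaf directions would, since these complete affine planes foliate an $n$-manifold with $n\ge 3$, force two nearby leaves to meet; so the common direction subspace of the leaves is defined on all of $M$ and $f(M)$ splits globally as a cylinder over a hypersurface. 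The main obstacle is precisely this globalization, together with the remaining low-dimensional cases $j=0$ — a $2$- or $3$-dimensional strip exhausting $M$, including the degenerate Lorentzian case of alternative $(iv)$ where $\cT$ is degenerate and the leaf directions may be null: there the rotation/intersection argument has to be carried out with a pseudo-orthonormal Cartan frame along the underlying null curve rather than an orthonormal one, and one must check directly that the resulting generalized-cylinder hypersurface $g:L^3\to\L^4$ is itself a partial tube over a curve.
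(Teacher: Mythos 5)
Your proposal takes a genuinely different route from the paper and, as written, it does not close. The paper's proof never touches the case analysis of Theorem~\ref{MainTheorem}: it uses embeddedness only to write $M$ locally as a level set $h^{-1}(c)$ of an ambient function, extends the unit vector field $Y$ spanning $\cD^\perp$ to a neighborhood in $\L^{n+1}$, and differentiates the identity $g(Y,\eta)=0$ along directions $X\in\cD$ to conclude $g(\alpha_f(X,Y),\eta)=0$, i.e.\ that $\alpha_f$ is adapted to $(\cD,\cD^\perp)$ at every point; Theorems~\ref{twisted} and~\ref{twisted-partialtube} then give the partial tube conclusion at once, with no nowhere-flatness hypothesis and no need to exclude the other alternatives. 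You instead try to eliminate alternatives $(i)$, $(iii)$ and $(iv)$ of Theorem~\ref{MainTheorem} by an argument about disjoint complete affine leaves, and that is where the gaps lie.

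Concretely: the exclusion of the ruled and strip cases is never actually carried out --- you yourself defer the ``globalization'', the $j=0$ cases, and the degenerate alternative $(iv)$ to steps that ``must be checked'', so precisely the part of the argument that separates $(ii)$ from the other alternatives is missing. Worse, the mechanism you propose for it --- that a nonzero infinitesimal rotation of the leaf directions forces two nearby complete affine leaves to meet, contradicting injectivity of $f$ --- fails in the relevant situations: if all the leaves contain a fixed $(n-2)$-dimensional direction subspace and the rotation takes place only in the complementary $3$-dimensional subspace, which is exactly the geometry of a strip $\R^{n-2}\times L^2$ (take for $L^2$ a ruled, helicoid-like surface and for leaves the products of its rulings with the flat factor), then nearby leaves stay pairwise disjoint even though their directions rotate, and such hypersurfaces can perfectly well be embedded. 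So injectivity cannot be exploited in the way you sketch, and a proof along your lines would require an entirely new idea, whereas the level-set/adaptedness argument of the paper bypasses the case analysis altogether. (Also, the flat case you dispose of at the outset needs justification: that every flat hypersurface of $\L^{n+1}$ --- for instance a cylinder over a $B$-scroll as in \cite{Gra79b} --- is a partial tube over a curve is asserted, not proved.)
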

\begin{proof}
Since $M$ is an embedded hypersurface it is locally a level set of a smooth function on $\L^{n+1}$. For a given point $p \in M$, let $\bar{U}$ be a neighborhood of $p$ in $\L^{n+1}$ and $h : \bar{U} \to \R$ a smooth function with non-vanishing gradient such that $U = M \cap \bar{U} = h^{-1}(c)$ for some $c \in \R$. By shrinking $\bar{U}$ if necessary, we may assume that there exists a smooth $n$-frame $\{\bar{X}_1, \ldots, \bar{X}_n \}$ on $\bar{U}$ which generates $TM = \ker dh$ at every point of $U$. Let $Y$ be a unit vector field on $U$ tangent to $M$ and normal to the foliation. One can write $Y = \Sigma_{i = 1}^n h_iX_i$ for smooth functions $h_i \in C^\infty(U)$ where $X_i = \bar{X}_i|_U$. We extend $Y$ on a neighborhood of $p$ in $\L^{n+1}$ by extending its coefficients $h_i$ and in this way we get $dh(Y) = 0$ and thus $g(Y , \eta) = 0$, where $\eta = |g(\bar{\nabla}h , \bar{\nabla}h)|^{-\frac{1}{2}} \bar{\nabla}h$. For every vector field $X$ tangent to the foliation we have
\begin{align*}
0 = X g(Y , \eta) & =  g(\bar{\nabla}_X Y , \eta) + g(Y , \bar{\nabla}_X\eta) \\
& =  g(\nabla_XY , \eta) + g(\alpha(X , Y) , \eta) + g(\alpha(X , Y) , \eta) \\
& =  2g(\alpha(X , Y) , \eta) .
\end{align*}
where $\alpha$ is the second fundamental form of $M$. So, it follows that $\alpha$ is adapted to $(\cD , \cD^\perp)$ and as in the proof of Theorem~\ref{MainTheorem}, in this case $M$ is a partial tube over a curve in $\L^{n+1}$.
\end{proof}

Let $T$ be a symmetric endomorphism on an $n$-dimensional vector space endowed with a Lorentzian scalar product. Then according to \cite{Pet69}, the matrix of $T$ has one of the the following four forms:
\begin{align*} 
& (i): 
\begin{pmatrix}
a_1 & & 0 \\  &  \ddots & \\ 0 & & a_n
\end{pmatrix}, & 
& (ii): 
\begin{pmatrix}
\begin{matrix} a_0 & b_0 \\ -b_0 & a_0 \end{matrix} & & 0 \\  &  \begin{matrix} a_1 &  \\  & \ddots \end{matrix} & \\ 0 & & a_{n-2}
\end{pmatrix} , \\  
&(iii): 
\begin{pmatrix}
\begin{matrix} a_0 & 0 \\ 1 & a_0 \end{matrix} & & 0 \\  &  \begin{matrix} a_1 &  \\  & \ddots \end{matrix} & \\ 0 & & a_{n-2}
\end{pmatrix} , & 
&(iv):  
\begin{pmatrix}
\begin{matrix} a_0 & 0 & 0 \\ 0 & a_0 & 1 \\ -1 & 0 & a_0 \end{matrix} & & 0 \\  &  \begin{matrix} a_1 &  \\  & \ddots \end{matrix} & \\ 0 & & a_{n-3}
\end{pmatrix} 
\end{align*}
where in cases $(i)$ and $(ii)$, $T$ is represented in an orthonormal basis and in cases $(iii)$ and $(iv)$ it is written with respect to a pseudo-orthonormal basis. 

Recall that a pseudo-Riemannian manifold $(M,g)$ is called Einstein if its Ricci tensor satisfies $Ric_M = \lambda g$ for some some $\lambda \in \R$ and $M$ is said to be conformally flat, if each point of $M$ belongs to a neighborhood $U \subset M$ such that, for a certain smooth function $f$ on $U$, the submanifold $(U, e^fg)$ is flat. 

Let $f: M^n \to \L^{n+1}$, $n \geq 4$, be an Einstein or conformally flat hypersurface and there exists a semi-Riemannian totally geodesic distribution $\cD$ of codimension one on $M$ with complete leaves. According to \cite{VanVer87} if $M$ is Einstein then at any point the matrix of its shape operator $S$ has one of the following forms: it is a scalar multiple of the identity matrix, or diagonal with at most one non-zero entry, or of the form $(iii)$ with $a_0 = ... = a_{n-2} = 0$. In all cases the index of the relative nullity belongs to $\{0, 1, n-1, n\}$ and thus we may assume that $\alpha_f$ is adapted to $(\cD , \cD^\perp)$, that is $S(\cD) \subset \cD$, or $\alpha_f$ vanishes on $\cD$, namely $S(\cD) \subset \cD^\perp$, since otherwise as we proved in Section~\ref{ProofMainTheorem} the index of the relative nullity would be $n-2$. Assume that at a given point $p$ in $M$, $S$ is a scalar multiple of the identity. Then clearly it preserves the distribution $\cD$. Also, if $S$ is diagonal with the only non-zero entry $(S)_{ii} = a$ for some $1 \leq i \leq n$, we get $S(\cD) \subset \cD$ since if $S(\cD) \subset \cD^\perp$ then one gets $g(Sx , x) = ax_i^2 = 0$ for any $x = (x_1, \ldots, x_n)^t \in \cT_p$. This implies $x_i = 0$ and therefore $S(\cD) = 0$. Similarly, if the matrix of $S$ is of the form $(iii)$ with $a_0 = ... = a_{n-2} = 0$ it follows that $S$ preserves $\cD$. 

For a conformally flat hypersurface, as shown in \cite{Gue03}, the matrix of its shape operator either takes the form $(i)$ with the $a_1 = \cdots = a_n$ or the form $(iii)$ with $a_0 = \cdots = a_{n-2}$. Analogous to the above argument for Einstein hypersurfaces, one can see that in this case the second fundamental form $\alpha_f$ is adapted to $(\cD , \cD^\perp)$, as well. So, we have proved
\begin{proposition}\label{Ein.Conf}
Let $f: M^n \to \L^{n+1}$, $n \geq 4$, be an Einstein or conformally flat hypersurface that carries a semi-Riemannian totally geodesic foliation of codimension one with complete leaves. Then $M$ is a partial tube over a curve.
\end{proposition}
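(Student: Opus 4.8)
The plan is to reduce everything to the trichotomy already set up at the start of Section~\ref{ProofMainTheorem}. Since $\cD$ is semi-Riemannian of codimension one and totally geodesic, at each point the subspace $S(\cD)$ has dimension at most one, and ``$\alpha_f$ adapted to $(\cD,\cD^\perp)$'' means precisely $S(\cD)\subset\cD$; once this holds at every point, Theorems~\ref{twisted} and~\ref{twisted-partialtube} give that $f(M)$ is a partial tube over a curve, exactly as in the Main Theorem's proof. So the only thing to do is to exclude, pointwise, the two remaining positions of the line $S(\cD)$: being transversal to both $\cD$ and $\cD^\perp$, or being a nonzero subspace of $\cD^\perp$.

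First I would feed in the algebraic classification of the shape operator of such hypersurfaces: by \cite{VanVer87}, an Einstein Lorentzian hypersurface of $\L^{n+1}$ has, at each point, shape operator a multiple of the identity, or diagonal with at most one nonzero entry, or of type $(iii)$ with vanishing diagonal; by \cite{Gue03}, a conformally flat one has shape operator a multiple of the identity or of type $(iii)$ with constant diagonal, and since $\dim S(\cD)\le 1$ while $n\ge 4$, the eigenvalue of multiplicity $\ge n-1$ must vanish (otherwise $S|_\cD$ has rank $\ge n-2>1$). Reading the relative nullity off each of these normal forms, one checks that the index of relative nullity $\nu_0$ lies in $\{0,1,n-1,n\}$, hence is never $n-2$ when $n\ge 4$. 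Combined with the fact, established in Section~\ref{ProofMainTheorem}, that on the (open) set where $S(\cD)$ is transversal to both $\cD$ and $\cD^\perp$ the relative nullity is forced to equal $n-2$, this shows that set is empty; so at every point $S(\cD)\subset\cD$ or $S(\cD)\subset\cD^\perp$. (This is the step where the nowhere-flat standing hypothesis of Section~\ref{ProofMainTheorem} enters.)

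It then remains to run through the short list of normal forms and rule out $0\ne S(\cD)\subset\cD^\perp$ in each case. For a nonzero multiple of the identity this is vacuous, as $\dim S(\cD)=n-1>1$ forces $S=0$; for $S$ diagonal with single nonzero entry $a$ at a unit eigenvector $e_i$, the assumption $S(\cD)\subset\cD^\perp$ gives $g(Sx,x)=\pm a(x^i)^2=0$ for every $x\in\cD$, hence $\cD=\{x^i=0\}$ and $S(\cD)=0$; for $S$ of type $(iii)$ with vanishing diagonal, in a pseudo-orthonormal basis $\{u,v,e_3,\dots,e_n\}$ with $Su=v$ and $v$ null, the assumption $S(\cD)\subset\cD^\perp$ gives $g(Sx,x)=\alpha^2=0$ (where $\alpha$ is the $u$-component of $x$), so $\cD\subset\ker S$, i.e.\ $\cD=\ker S$, which is degenerate — contradicting that $\cD$ is semi-Riemannian. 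Hence $S(\cD)\subset\cD$ everywhere, $\alpha_f$ is adapted to $(\cD,\cD^\perp)$, and Theorems~\ref{twisted} and~\ref{twisted-partialtube} finish the argument. I expect the only delicate points to be bookkeeping ones — correctly extracting the relative-nullity values from the (pseudo-)orthonormal normal forms, and the observation that $\ker S$ is degenerate in the type-$(iii)$ case — rather than anything conceptually new beyond Theorem~\ref{MainTheorem}.
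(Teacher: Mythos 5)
Your overall route is the same as the paper's: quote the pointwise normal forms of the shape operator from \cite{VanVer87} and \cite{Gue03}, invoke the Section~\ref{ProofMainTheorem} fact that at a point where $S(\cD)$ is transversal to both $\cD$ and $\cD^\perp$ the relative nullity would be $n-2$ (impossible for those forms when $n\ge 4$), then rule out $0\neq S(\cD)\subset\cD^\perp$ form by form and conclude adaptedness, hence a partial tube via Theorems~\ref{twisted} and~\ref{twisted-partialtube}. Your explicit treatment of the nilpotent type-$(iii)$ case (where the paper only writes ``similarly''), showing $\cD$ would have to coincide with the degenerate kernel of $S$, is correct, and you rightly flag the reliance on the nowhere-flat standing hypothesis, which the paper's own proof shares.

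There is, however, one step that fails: you use $\dim S(\cD)\le 1$ as an unconditional fact. The Gauss-equation identity $g(SX_1,X_3)g(SX_2,Y)=g(SX_2,X_3)g(SX_1,Y)$ forces $\dim S(\cD)\le 1$ only at points where $g(SX,Y)\neq 0$ for some $X\in\cD$, $Y\in\cD^\perp$, i.e.\ where $\alpha_f$ is \emph{not} adapted; at adapted points it carries no information (the paper's own phrasing in Section~\ref{ProofMainTheorem} is loose on this point). In particular $S=\lambda\,\id$ with $\lambda\neq 0$ is perfectly compatible with the hypotheses --- e.g.\ the umbilic Einstein hypersurface $H^n\subset\L^{n+1}$ foliated by totally geodesic hyperbolic hyperplanes --- so your claim that a nonzero multiple of the identity ``forces $S=0$'' is false; it is harmless there, since $S(\cD)=\cD$ makes adaptedness trivial, but the same invalid bound is what you use to conclude that the constant diagonal in the conformally flat type-$(iii)$ form must vanish. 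As a result, the case $a_0=\cdots=a_{n-2}=a\neq 0$ is never actually treated in your final case analysis. The gap is easy to close: at such a point $S$ is invertible, so the relative nullity is $0$ (which already excludes the transversal alternative), and $S(\cD)\subset\cD^\perp$ is impossible since then $\dim S(\cD)=n-1>1=\dim\cD^\perp$ (equivalently, $g(Sx,x)=a\,g(x,x)+\alpha^2$, with $\alpha$ the $u$-component of $x$, is a nondegenerate quadratic form and cannot vanish on the $(n-1)$-dimensional subspace $\cD$); hence $S(\cD)\subset\cD$ there as well, and the rest of your argument goes through exactly as in the paper.
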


\providecommand{\href}[2]{#2}
\bibliographystyle{amsplain}

\end{document}